\newcommand{\setheader}{Antibiotics Time Machine is NP-hard}
\newcommand{\setdate}{\today}
\newcommand{\settitle}{\setheader}
\renewcommand{\a}{\alpha}
\renewcommand{\c}{\mathbf{c}}
\renewcommand{\v}{\widetilde v}
\newcommand{\e}{\varepsilon}
\renewcommand{\d}{\delta}
\renewcommand{\l}{{\ell}}
\newcommand{\s}{\sigma}
\newcommand{\x}{\mathbf{x}}
\newcommand{\z}{\mathbf{z}}
\newcommand{\T}{{}^{\mathsf{T}}}
\renewcommand{\T}{\mathcal{T}}
\newcommand{\F}{\mathbf{F}}
\newcommand{\R}{\mathbb{R}}
\newcommand{\C}{\mathcal{C}}
\newcommand{\N}{\mathbb{N}}
\newcommand{\bs}{\smallsetminus}
\theoremstyle{plain}
\newtheorem{thm}{Theorem}
\newtheorem{lem}[thm]{Lemma}
\theoremstyle{definition}
\theoremstyle{remark}
\newcommand{\latin}[1]{\textsl{#1}}
\newcommand{\defn}[1]{\textbf{#1}}
\newcommand{\sTTs}{\s T_1 T_2\dotsm T_N\s^\top}
\begin{document}
\title{\settitle}
\author{Ngoc M Tran}
\address{Department of Mathematics, The University of Texas at Austin, TX 78751, USA}
\email{ntran@math.utexas.edu}
\author{Jed Yang}
\address{School of Mathematics, The University of Minnesota, MN 55455, USA}
\email{jedyang@umn.edu}

\maketitle
\begin{abstract}
The antibiotics time machine is an optimization question posed by Mira \latin{et al.} on the design of antibiotic treatment plans to minimize antibiotic resistance.
The problem is a variation of the Markov decision process.
These authors asked if the problem can be solved efficiently.
In this paper, we show that this problem is NP-hard in general.
\end{abstract}
\thispagestyle{empty}

\newcommand{\state}[1]{\mathbf{#1}}
\renewcommand{\s}{\state{s}}
\renewcommand{\d}{\state{d}}
\newcommand{\f}{\state{f}}

\newcommand{\Mira}{Mira \latin{et al.}}

\section{Problem statement and results}
Resistance to antibiotics in bacteria is a central problem in medicine.
One method for mitigating its consequences is to rotate the use of antibiotics.
A natural question arises: what is the optimal sequence of antibiotics one should apply?
In this work, we follow the model proposed by \Mira~\cite{mira2014rational}.
Suppose we have a population of bacteria, all of the same unmutated genotype (called the wild type).
We are given a set of antibiotics.
For each bacteria of a given type, an antibiotic mutates it to another type with some known probability.
The problem is to find a sequence of antibiotics (called a treatment plan)
that maximizes the fraction of bacteria returning to the wild type.
This is equivalent to minimizing the fraction of antibiotics-induced mutations in the bacterial population.

Here is a concrete description of the model.
Let $S$ be a set of $d$ states, where each state is a bacterial genotype.
A $d\times d$ matrix $T=(t_{ij})$ is a \defn{transition matrix} if $t_{ij}\in[0,1]$ and each row sums to~$1$.
Let $\T$ be a finite set consisting of $K$ transition matrices,
each corresponding to the effect of a given antibiotic on the genotypes of the bacterial population.
That is, $t_{ij}$ is the probability that a bacteria of genotype $i$ will have genotype $j$ after being treated with the corresponding antibiotic.
The standard $(d-1)$-dimensional simplex $\Delta^{d-1}$ is given by $\Delta^{d-1}=\{(x_1,x_2,\dotsc,x_d)\in\R^d\mid\text{$\sum_{i=1}^dx_i=1$ and $x_i\geq0$ for all $i$}\}$.
Let $\s=(1,0,\ldots,0)\in \Delta^{d-1}$ be the starting state of the population,
where all bacteria are of the first state (the wild type).
Let $N\in\N$ be the length of the treatment plan.
The \defn{time machine $\mathcal{TM}(d, K, N)$} is the solution to the following optimization problem

\begin{align}
\mbox{ maximize} \hspace{1em}  &\sTTs \nonumber \\
\mbox{ subject to} \hspace{1em} & T_1, \dotsc, T_N \in \T. \label{eqn:tm}
\end{align}

The term \emph{time machine}, coined by \Mira,
alludes to the idea of reversal to the wild type of unnecessary mutations done by antibiotic treatments.
They proposed and solved a specific numerical instance of the time machine for $16$ genotypes of the TEM $\beta$-lactamase,
with a specific model of mutation, and a set of up to six antibiotics.
Their algorithm is to enumerate all possible treatment plans of a given length with the given set of antibiotics
and output the optimal one.

With $K$ antibiotics, there are $K^N$ many treatment plans of length~$N$,
and thus direct enumeration is not an efficient algorithm.
\Mira\ raised the question of whether an efficient algorithm exists.
In this paper, we show that the general problem is NP-hard.

\begin{thm}\label{thm:tm}
The time machine optimization problem in \textup{(\ref{eqn:tm})} is NP-hard.
\end{thm}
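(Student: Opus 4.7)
The plan is a polynomial-time reduction from $3$-SAT. Given a $3$-CNF formula $\vp$ with $n$ variables $x_1,\dotsc,x_n$ and $m$ clauses $C_1,\dotsc,C_m$, I would build a time machine instance $\mathcal{TM}(d,K,N)$ with $K=2$ matrices $M_T,M_F$, plan length $N=n+1$, and $d$ polynomial in $n+m$, such that the optimum equals $1$ if and only if $\vp$ is satisfiable. Since the construction runs in polynomial time and $3$-SAT is NP-hard, this will establish NP-hardness of the time machine problem.

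The plan $M_{b_1}M_{b_2}\dotsm M_{b_{n+1}}$ will encode the assignment $\t$ via $\t_i=b_{i+1}$ (with $b_1$ playing only an initial ``spread'' role). I would use the state space $\{\s,R\}\cup\{h_1,\dotsc,h_{n-1}\}\cup\{(j,i):j\in[m],\,i\in\{0,1,\dotsc,n\}\}$, where $\s$ is the wild type, $R$ is an absorbing reject state, $(j,i)$ is a ``clause-$j$ checker having processed the first $i$ variables'', and the $h_k$ form a delay chain. Each matrix $M_b$ (for $b\in\{T,F\}$) is specified by the following rules: $\s\to(j,0)$ with probability $1/m$ for each $j\in[m]$; for $i\in[n]$ the deterministic transition $(j,i-1)\to h_{n-i}$ (with convention $h_0:=\s$) if $x_i=b$ satisfies $C_j$, else $(j,i-1)\to(j,i)$; $(j,n)\to R$; $h_k\to h_{k-1}$; and $R\to R$. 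Row sums are manifestly~$1$.

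The main computation to verify is that the walk from $\s$ factors across clauses. After the first step, mass $1/m$ sits at each $(j,0)$. For each $j$, the walk $(j,0)\to(j,1)\to\dotsm$ continues until the first index $i$ with $x_i=b_{i+1}$ satisfying $C_j$, at which moment it jumps into $h_{n-i}$ and then deterministically descends the chain, arriving at $\s$ at exactly step $n+1$; otherwise the walk ends at $(j,n)$ and contributes nothing to the mass at $\s$. Hence $\sTTs$ equals the fraction of clauses of $\vp$ satisfied by $\t$, the optimum of the instance equals the maximum such fraction over all assignments, and this is $1$ precisely when $\vp$ is satisfiable.

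The one delicate ingredient is the timing gadget: a clause can first be satisfied by any of its literals, so success paths enter the delay chain at different steps, yet they must all rendezvous at $\s$ at the final step $N=n+1$. This is exactly what the chain $h_{n-1}\to\dotsm\to h_1\to\s$ is engineered to provide, with the entry point $h_{n-i}$ leaving precisely $n-i$ further transitions. Once this synchronization is set up, checking stochasticity of $M_T,M_F$ and the probability identity above amounts to routine bookkeeping, so I expect the main effort of the proof to lie in designing and verifying this synchronization cleanly.
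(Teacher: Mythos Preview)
Your reduction is correct and genuinely different from the paper's. Both reduce from $3$-SAT and target the threshold $\alpha=1$, but the gadgetry is orthogonal. The paper uses $K=7m+2$ transition matrices (one start matrix, one finish matrix, and seven per clause), $d=3n+m+3$ states, and plan length $N=m+2$; consistency of the extracted assignment is enforced by per-variable states $\x,\x^+,\x^-$ that get killed if two clause matrices disagree on a variable. You instead use only $K=2$ matrices and encode the assignment in the plan itself, at the price of $d=\Theta(mn)$ states and $N=n+1$; consistency is automatic because the $i$-th bit of the plan is the value of $x_i$, and the work goes into the delay chain that synchronizes all satisfied-clause packets to land on $\s$ at the final step.

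What each buys: your construction proves the sharper statement that the problem is NP-hard already for $K=2$, and since $\s T_1\cdots T_N\s^\top$ equals the fraction of satisfied clauses, you actually embed MAX-$3$SAT and get inapproximability for free. The paper's construction keeps the state space linear in $n+m$ and makes the role of each matrix more transparent (start, per-clause, finish), at the cost of a large matrix set. Both are polynomial reductions, so either suffices for Theorem~\ref{thm:tm}.
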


We first consider an easier approximation problem: for a given threshold $\a\in[0,1]$,
decide whether there exists a treatment plan of length $N$ such that 
\begin{gather}
\sTTs \geq \a \nonumber \\
\mbox{ subject to } T_1, \dotsc, T_N \in \T. \label{eqn:tm.decision}
\end{gather}

Clearly if one can solve (\ref{eqn:tm}) in polynomial time,
then one can also solve (\ref{eqn:tm.decision}) in polynomial time.
Our main result states that the latter problem is NP-hard, and thus Theorem~\ref{thm:tm} follows as a corollary.

\begin{thm}
The time machine decision problem in \textup{(\ref{eqn:tm.decision})} is NP-hard.
\end{thm}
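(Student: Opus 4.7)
The plan is to reduce from 3-SAT. Given a 3-CNF formula $\vp$ on variables $x_1,\dotsc,x_n$ with clauses $C_1,\dotsc,C_m$, I will construct in polynomial time a time machine instance with a threshold $\a$ whose decision answer mirrors the satisfiability of $\vp$. The idea is to use $K=2$ antibiotics $T^0,T^1$ encoding the two truth values and length $N=n+1$, so that any treatment plan $T^{a_1}\dotsm T^{a_{n+1}}$ represents the assignment $(a_1,\dotsc,a_n)\in\{0,1\}^n$ (with the last slot inert), and to design the matrices so that the return probability to the wild type equals the fraction of clauses satisfied by that assignment.

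To carry this out, I would take $d=2mn+2$ states: the wild type $\s$, an absorbing dead state, and ``track states'' $s_{j,i,f}$ indexed by a clause $C_j$, a position counter $i\in\{1,\dotsc,n\}$, and a satisfaction flag $f\in\{0,1\}$. Writing $\mathrm{match}_j^i(b)=1$ if setting $x_i=b$ satisfies the literal (if any) of $C_j$ involving $x_i$, and $0$ otherwise, each matrix $T^b$ is defined to (i)~branch $\s$ uniformly with probability $1/m$ among the initial track states $s_{j,1,\mathrm{match}_j^1(b)}$ across $j$; (ii)~for $i<n$, deterministically advance $s_{j,i,f}$ to $s_{j,i+1,f\vee\mathrm{match}_j^{i+1}(b)}$; (iii)~from $s_{j,n,1}$ return to $\s$, and from $s_{j,n,0}$ move to the dead state; (iv)~fix the dead state as absorbing. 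Each $T^b$ is manifestly a row-stochastic matrix.

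A direct induction on the step count then shows that after $N=n+1$ applications, the probability mass on $\s$ equals $\frac{1}{m}\cdot\#\{j:C_j\text{ is satisfied by }(a_1,\dotsc,a_n)\}$, independently of the choice of $a_{n+1}$. Hence some treatment plan achieves return probability $1$ if and only if $\vp$ is satisfiable, so setting $\a=1$ completes the reduction.

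The step I expect to require the most care is ruling out any ``cheating'' treatment plan that could exceed the fraction-of-satisfied-clauses value through unintended dynamics. The forward-only structure of the tracks is what prevents this: from $\s$ every antibiotic lands in a position-$1$ track, the position counter strictly advances with each subsequent application until step $n+1$, and the only route back to $\s$ at that step requires the satisfaction flag to be already set. Consequently every trajectory starting at $\s$ is forced into the honest interpretation above, and no cheating is possible, giving NP-hardness of \textup{(\ref{eqn:tm.decision})}.
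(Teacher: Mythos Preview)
Your reduction is correct and takes a genuinely different route from the paper's. The paper also reduces from $3$-SAT but encodes the instance \emph{by clauses} rather than \emph{by variables}: it sets $N=m+2$, $K=7m+2$, $d=3n+m+3$, introduces dedicated start and finish matrices $S,F$, and for each clause creates seven transition matrices, one per satisfying local assignment of its three variables. The correctness argument then has to show that any successful sequence is forced to begin with $S$, end with $F$, and use exactly one matrix per clause, \emph{and} that the per-clause local assignments are globally consistent; this last point is enforced by auxiliary variable states $\x^{\pm}$ that route mass to the death state whenever two clause-matrices disagree on a shared variable. Your construction sidesteps the consistency issue entirely by letting the treatment plan encode the global assignment bit by bit, so the only thing to check is the forward-only track structure you already identified. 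As a bonus, your version proves the sharper statement that the decision problem is NP-hard already for $K=2$ antibiotics, whereas the paper's $K$ grows linearly with the number of clauses. The trade-off is that your state space is $d=\Theta(mn)$ against the paper's $d=\Theta(n+m)$, but this is irrelevant for the NP-hardness conclusion.
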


Our proof relies on reducing $3$-SAT to a special instance of (\ref{eqn:tm.decision}) where $\a=1$.
Since $3$-SAT is NP-hard \cite{karp1972reducibility}, we have that (\ref{eqn:tm.decision}) is also.
The reduction construction is described in Section~\ref{sec:3sat.reduction}.
The proof that this is a correct reduction is in Section~\ref{sec:3sat.proof}.

\subsection{Related literature}
The time machine may also arise in large scale robotics control,
where one has a large number of robots with $d$ possible states,
and each $T_i$ is an instruction which changes each robot's state independently at random.
In general, it can be viewed as a Markov decision problem with infinitely many states.
Indeed, if there was only one bacteria instead of a population, one may apply an antibiotic $T_i$,
check the new state of the bacteria, and repeat.
In this case, the time machine is a Markov decision process on $d$ states,
which can be solved by dynamic programming. When there is a population of bacteria,
the population state is a point in the standard $(d-1)$-dimensional simplex $\Delta^{d-1}$, which is an infinite set. 

There is a large literature on Markov decision processes,
see \cite{feinberg2002handbook} and references therein.
However, we are unaware of any existing results on the time machine,
or the equivalent of Theorem~\ref{thm:tm}.
Closely related to the time machine is the partially observable Markov decision process \cite{dutech2013partially},
where the underlying system is assumed to be in one of the unobservable $d$ states,
and $s \in \Delta^d$ reflects the uncertainty over the true state.
It would be interesting to see if techniques from this setup can yield polynomial time approximations to the time machine.

\subsection{Open problems}
Many interesting questions remain on the antibiotics time machine.
We mention two examples.
The last question was raised by Joe Kileel.
\begin{enumerate}
  \item For what classes of antibiotics $\mathcal{T}$ is the problem solvable in polynomial time?
  Are there biologically relevant instances?
  \item When does the limit $\lim_{N \to \infty}\max_{T_i \in \mathcal T}\sTTs$ exist?
  If so, how fast is the convergence?
  In this case, for large $N$, can one produce an approximate time machine in polynomial time?
\end{enumerate}

\section{Reduction construction} \label{sec:3sat.reduction}
Let $X=\{x_1,\dotsc,x_n\}$ be a set of Boolean variables taking Boolean values $\{+,-\}$.
A triple $(\e_ix_i,\e_jx_j,\e_kx_k)$ with $\e_i,\e_j,\e_k\in\{+,-\}$ is called a \defn{clause}.
Let $\C=\{c_1,\dotsc,c_m\}$ be a set of clauses.
Write $(x_i = v_i)_i$ to mean $x_i = v_i$ for $i = 1,\dotsc, n$.
An \defn{assignment} $(x_i=v_i)_i$ \defn{satisfies} a clause $c=(\e_ix_i,\e_jx_j,\e_kx_k)$
if $(\e_i,\e_j,\e_k)\neq(v_i,v_j,v_k)$.
Without loss of generality, assume each $x\in X$ actually occurs (negated or not) somewhere.
The $3$-SAT problem is: given a set of variables and clauses,
decide whether there exists an assignment that satisfies all clauses.
In this section, we reduce a $3$-SAT instance to an instance of (\ref{eqn:tm.decision}).

Let $N=m+2$ and $\a=1$.
Create $d=3n+m+3$ states and $K=7m+2$ transition matrices.
Create the start state~$\s$, a ``death'' state~$\d$, and a ``tally'' state~$\f$.
For each variable $x\in X$, create states $\x,\x^-,\x^+$.
For each clause $c=(\e_ix_i,\e_jx_j,\e_kx_k)\in\C$,
create state~$\c$.
We identify each state with its characteristic (row) vector and define each transition matrix by its action on these basis vectors and extend linearly.

\renewcommand{\S}{S}
\renewcommand{\F}{F}
\newcommand{\Tc}{T_c^{v_i,v_j,v_k}}
For each choice of $(v_i,v_j,v_k)\in\{+,-\}^3\bs\{(\e_i,\e_j,\e_k)\}$,
create a transition matrix $T=\Tc$ as follows:
\[ \z T=\begin{cases}
   \d & \z=\s \\
   \f & \z=\c \\
   \x_\l^{v_\l} & \z=\x_\l,\ \l\in\{i,j,k\} \\
   \d & \z=\x_\l^{-v_\l},\ \l\in\{i,j,k\} \\
   \z & \text{otherwise.}
\end{cases} \]
%We say that these seven matrices are \emph{associated with}~$c$.
%We say that $\Tc$ is associated with $c$ and \emph{asserts} $x_i=v_i$ (and similarly for $j$ and $k$).
Define a starting matrix $\S$ by
\[ \z \S=\begin{cases}
   p\left(\sum_{x\in X}\x + \sum_{c\in\C}\c\right) & \z=\s \\
   \d & \text{otherwise,}
\end{cases} \]
where $p=1/(n+m)$.
Lastly, define a final matrix $\F$ by
\[ \z \F=\begin{cases}
   \s & \z\in\left\{\x_i^{v_i}:v_i\in\{+,-\}\right\} \\
   \s & \f \\
   \d & \text{otherwise.}
\end{cases}\]
Let $\mathcal{T}$ consist of $\S$, $\F$, and the $7m$ transition matrices $\Tc$ defined above.
This concludes the reduction construction.

\section{Proof of Main Theorem} \label{sec:3sat.proof}

We shall prove that with the setup in Section~\ref{sec:3sat.reduction},
the time machine decision problem (\ref{eqn:tm.decision}) solves the $3$-SAT problem with the given clauses.
Since the latter is NP-hard, this implies that (\ref{eqn:tm.decision}) is also.

We refer to the fraction of the bacterial population in a given state as its \defn{weight}.
First, suppose $(x_i=v_i)_i$ is a satisfying assignment.
Apply $\S$ to~$\s$.
For each $c=(\e_ix_i,\e_jx_j,\e_kx_k)\in\C$,
apply~$\Tc$,
which exists as $c$ is satisfied.
Finally, apply~$\F$.
\begin{lem}
The sequence of matrices given above sends all weights back to~$\s$.
That is, it is a solution of \textup{(\ref{eqn:tm.decision})} for $\alpha = 1$.
\end{lem}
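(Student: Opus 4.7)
The plan is to simulate the prescribed treatment plan forward, tracking the weight carried by each state after every matrix application, and verify that no weight ever escapes to $\d$. Because every transition matrix in $\T$ has row sums equal to $1$, total weight is preserved; the entire content of the proof is that the ``death'' rules $\z\mapsto\d$ never fire on a state that carries weight.

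First, I would record the distribution after applying $\S$ to $\s$: weight $p$ on each $\x$ (one per variable) and weight $p$ on each $\c$ (one per clause), summing to $(n+m)p = 1$, with zero on $\s$ and $\d$. Next, I would set up an invariant maintained by induction on the number~$t$ of clauses processed: after applying $T_{c_1}^{v_{i_1},v_{j_1},v_{k_1}},\dotsc,T_{c_t}^{v_{i_t},v_{j_t},v_{k_t}}$ in order, the distribution places weight $p$ on $\x_i^{v_i}$ for each variable $x_i$ that has appeared in $c_1,\dotsc,c_t$, weight $p$ on $\x_i$ for each variable not yet appearing, weight $p$ on each unprocessed $\c_j$, and weight $tp$ on $\f$, with all other states (in particular $\s$, $\d$, and every $\x_i^{-v_i}$) carrying zero.

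The inductive step amounts to a case analysis of $T = \Tc$ for $c = c_{t+1} = (\e_ix_i,\e_jx_j,\e_kx_k)$ against the definition of $T$: the weight on $\c$ passes to $\f$; for each $\l\in\{i,j,k\}$, whatever weight sits on $\x_\l$ passes to $\x_\l^{v_\l}$; every other weight-bearing state falls into the ``otherwise'' branch and is fixed. The one thing that could go wrong is weight sitting on some $\x_\l^{-v_\l}$ being annihilated to~$\d$. This is precisely ruled out by the consistency of the global assignment $(x_i=v_i)_i$: if $x_\l$ was already assigned in some earlier clause, the weight on it is on $\x_\l^{v_\l}$, and $\x_\l^{v_\l}\neq\x_\l^{-v_\l}$, so the ``otherwise'' branch applies and the state is preserved. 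The existence of $T$ itself is guaranteed by the clause being satisfied, i.e., $(v_i,v_j,v_k)\neq(\e_i,\e_j,\e_k)$.

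After processing all $m$ clauses, by the invariant plus the assumption that each variable actually occurs, every $\x_i$-weight has migrated to the corresponding $\x_i^{v_i}$, and $\f$ carries weight $mp$. Applying $\F$ then collapses every $\x_i^{v_i}$ and $\f$ back to $\s$, contributing $np + mp = 1$, and since no other state carries weight, the ``otherwise to $\d$'' rule of $\F$ costs nothing. The resulting distribution is $\s$, giving $\s\,\S T_{c_1}\dotsm T_{c_m}\F\,\s^\top = 1 \geq \a$. The main obstacle is the bookkeeping in the induction step, specifically the observation that previously-set $\x_\l^{v_\l}$ states survive later transition matrices because global consistency of the assignment precludes them matching the $\x_\l^{-v_\l}$ pattern of any later $T$.
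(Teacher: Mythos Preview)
Your proof is correct and takes the same approach as the paper's---tracking each packet of weight through $\S, T_{c_1},\dotsc,T_{c_m}, \F$---only with an explicit inductive invariant in place of the paper's informal narration. In fact you are more careful than the paper in one respect: you explicitly verify that a previously-placed packet on $\x_\l^{v_\l}$ survives any later $\Tc$ (because global consistency of the assignment rules out the $\x_\l^{-v_\l}$ branch), a point the paper's terse proof leaves implicit.
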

\begin{proof}
For each $x\in X$,
the matrix $\S$ sends some weight to~$\x$.
The weight is moved to $\x^{v_i}$ the first time $\pm x$ occurs in a clause~$c$,
and is moved back to $\s$ by $\F$ at the end.
For each $c\in\C$,
the matrix $\S$ sends some weight to~$\c$.
This weight is moved to $\f$ by $\Tc$,
and is moved back to $\s$ by $\F$ at the end.
Therefore all the weights returns to~$\s$, as desired.
\end{proof}

%\bigskip

Conversely,
suppose there is a sequence $T_1,T_2,\dotsc,T_N$ of transition matrices so that $$\sTTs=1.$$
The aim is to extract a satisfying assignment.

Consider the process of applying the transition matrices $T_i$ to $\s$ sequentially in $N$ steps.
Note that any weight at $\d$ stays at $\d$ forever.
As such, to achieve full weight at~$\s$ after $N$ steps,
the state $\d$ cannot receive weight at any point in the process.%
\footnote{Intuitively, $\d$ is a ``death'' state, where bacteria goes to die, never to be recovered to the wild type.}
Consequently, it is clear that the first matrix to apply has to be~$\S$,
as we have $T(\s)=\d$ for $T\in\T\bs\{\S\}$.
%since for any matrix $T\in\T$, $T\neq \S$, we have $T(\s)=\d$.

The only way for $\s$ to gain weight is to apply~$\F$.
Prior to applying $\F$ for the first time,
all weights must be supported on the $\x_i^\pm$ and~$\f$,
as to avoid losing any weight to the death state~$\d$.
In particular, for each clause $c=(\e_ix_i,\e_jx_j,\e_kx_k)\in\C$,
state~$\c$ must no longer carry weight at this point.
That is, after $\c$ receives some weight by~$\S$ in the first step,
it must subsequently lose the weight,
which can only be achieved by applying an associated matrix $\Tc$ for some choice of $(v_i,v_j,v_k)$.
This takes (at least) one step for each clause.
%
%This means we must apply at least $m$ matrices after $\S$ and before the first application of~$\F$.
Since the sequence of matrices is of length precisely $N=m+2$,
we conclude that the sequence starts with $\S$, finishes with $\F$,
and contains precisely one matrix corresponding to each clause.

When $\S$ is applied,
the full weight at $\s$ is split into $n+m$ \defn{packets}, each of weight $p=1/(n+m)$.
%Since $\S$ is only applied once and 
Since no other matrices split weights,
we may consider the remaining process as a discrete system moving each packet as a unit.
We already saw that the $m$ packets associated to the clauses are moved to~$\f$ during the middle $m$ steps.
It remains to analyze the remaining $n$ packets associated to the variables.
To avoid moving any weight to the death state~$\d$,
the packet at $\x_i$ can only be moved to $\x_i^+$ or $\x_i^-$.
Once this happens, the packet can only be moved again by $\F$ at the last step.
So at the penultimate step,
there is a packet on $\x_i^{\v_i}$ for exactly a choice of~$\v_i$ for each~$i$.
The following lemma finishes the proof.

\begin{lem}
For each $i$, let $\v_i$ be such that $\x_i^{\v_i}$ has nonzero weight after $m+1$ steps,
\latin{i.e.},
$$\s T_1 T_2\dotsm T_{N-1}(\x_i^{\v_i})^\top>0.$$
Then $(x_i=\v_i)_i$ is a satisfying assignment.
\end{lem}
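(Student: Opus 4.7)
The plan is to show that the sequence $T_1,\dotsc,T_{N-1}$ determines, for each variable $x_\l$, a unique sign $\v_\l\in\{+,-\}$ consistent with every clause matrix that touches~$x_\l$, and that this consistency forces each clause to be satisfied by $(x_i=\v_i)_i$.

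First, I would pin down the structure of the sequence. From the discussion preceding the lemma, we already know $T_1=\S$, $T_N=\F$, and that the middle $m=N-2$ matrices are clause matrices, one $\Tc$ for each clause $c\in\C$ (in some order, with some choices of $(v_i,v_j,v_k)$). I would then track the packet associated with each variable $x_\l$. After $\S$, this packet sits on $\x_\l$. The only matrices that move weight off $\x_\l$ are the clause matrices $\Tc$ for clauses $c$ containing $\pm x_\l$; each such matrix sends $\x_\l\mapsto\x_\l^{v_\l}$ while simultaneously sending $\x_\l^{-v_\l}\mapsto\d$. Other matrices fix $\x_\l$, $\x_\l^+$, and $\x_\l^-$ (recall no $\S$ or $\F$ appears in the middle, since their use would route weight into~$\d$).

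The crux is the following consistency claim: if two clause matrices $T_{c}^{v}$ and $T_{c'}^{v'}$ in the sequence both involve variable $x_\l$, then their $\l$-components agree. Suppose $T_{c}^{v}$ comes first and uses value $v_\l$ at position $\l$; after it is applied, the packet of $x_\l$ sits on $\x_\l^{v_\l}$. When a later clause matrix $T_{c'}^{v'}$ involving $x_\l$ uses value $v_\l'$, it maps $\x_\l^{-v_\l'}\mapsto\d$. If $v_\l'\neq v_\l$, then $\x_\l^{v_\l}=\x_\l^{-v_\l'}$ carries the whole packet into~$\d$, contradicting the requirement that $\d$ never receive weight. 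Hence $v_\l'=v_\l$. By the assumption that every variable occurs in some clause, at least one clause matrix touches $x_\l$, so there is a well-defined common value; tracking the packet shows this value is exactly the $\v_\l$ in the lemma's statement.

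Finally, I would translate this into satisfaction. Fix a clause $c=(\e_ix_i,\e_jx_j,\e_kx_k)$. The sequence contains a unique clause matrix for $c$, namely $T_c^{v_i,v_j,v_k}$ for some $(v_i,v_j,v_k)\in\{+,-\}^3\bs\{(\e_i,\e_j,\e_k)\}$. By the consistency claim just proved, $(v_i,v_j,v_k)=(\v_i,\v_j,\v_k)$. Hence $(\v_i,\v_j,\v_k)\neq(\e_i,\e_j,\e_k)$, which by definition means the assignment $(x_i=\v_i)_i$ satisfies~$c$. As this holds for every clause, $(x_i=\v_i)_i$ is a satisfying assignment. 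The main obstacle is the consistency step; once one accepts that avoiding $\d$ is a very rigid constraint, everything else is bookkeeping, but this rigidity needs to be invoked at precisely the right place (the interaction between the two maps $\x_\l\mapsto\x_\l^{v_\l}$ and $\x_\l^{-v_\l}\mapsto\d$ inside the same clause matrix).
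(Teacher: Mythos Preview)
Your argument is correct and matches the paper's: both hinge on the observation that once the packet for $x_\l$ reaches $\x_\l^{v_\l}$ via some clause matrix, any later clause matrix touching $x_\l$ must reuse the same value $v_\l$ or else route the packet to $\d$, so the clause matrix $T_c^{v_i,v_j,v_k}$ necessarily has $(v_i,v_j,v_k)=(\v_i,\v_j,\v_k)\neq(\e_i,\e_j,\e_k)$. The paper phrases this as a direct per-clause contradiction rather than isolating your ``consistency claim,'' but the content is identical.
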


Indeed, consider a clause $c=(\e_ix_i,\e_jx_j,\e_kx_k)\in\C$.
We know that (exactly) one associated transition matrix $\Tc$ is used.
%we use a transition matrix $\Tc$ for (precisely) one choice of $(v_i,v_j,v_k)$.
Suppose, towards a contradiction, that $\v_\l\neq v_\l$ for some $\l\in\{i,j,k\}$.
After applying $\Tc$,
the packet corresponding to $x_\l$ is at $\x_\l^{v_\l}$ or~$\d$,
with no way of moving to $\x_\l^{\v_\l}$, a contradiction.
So $(\v_i,\v_j,\v_k)=(v_i,v_j,v_k)\neq(\e_i,\e_j,\e_k)$ by construction,
implying that clause~$c$ is satisfied.
\qed

This shows that a $3$-SAT instance has a solution if and only if the associated time machine can attain a threshold of~$1$.
We therefore conclude that the time machine decision problem is NP-hard, as desired.

\subsection*{Acknowledgements}
The authors wish to thank Bernd Sturmfels and Joel Kileel for stimulating discussions.
The first-named author is supported by an award from the Simons Foundation (\#197982 to The University of Texas at Austin).
The second-named author is supported by NSF RTG grant NSF/DMS-1148634.

\bibliographystyle{alpha}
\bibliography{bacteria}

\newcommand{\etalchar}[1]{$^{#1}$}
\begin{thebibliography}{MCG{\etalchar{+}}14}

\bibitem[DS13]{dutech2013partially}
Alain Dutech and Bruno Scherrer.
\newblock Partially observable markov decision processes.
\newblock {\em Markov Decision Processes in Artificial Intelligence}, pages
  185--228, 2013.

\bibitem[FSA02]{feinberg2002handbook}
Eugene~A Feinberg, Adam Shwartz, and Eitan Altman.
\newblock {\em Handbook of Markov decision processes: methods and
  applications}.
\newblock Kluwer Academic Publishers Boston, MA, 2002.

\bibitem[Kar72]{karp1972reducibility}
Richard~M Karp.
\newblock {\em Reducibility among combinatorial problems}.
\newblock Springer, 1972.

\bibitem[MCG{\etalchar{+}}14]{mira2014rational}
Portia~M Mira, Kristina Crona, Devin Greene, Juan~C Meza, Bernd Sturmfels, and
  Miriam Barlow.
\newblock Rational design of antibiotic treatment plans.
\newblock {\em arXiv preprint arXiv:1406.1564}, 2014.

\end{thebibliography}

\end{document}